\newtheorem{theorem}{Theorem}
\newtheorem{lemma}{Lemma}
\newtheorem{proposition}[theorem]{Proposition}
\newtheorem{corollary}[theorem]{Corollary}
\newtheorem{observation}{Observation}
\theoremstyle{definition}
\newtheorem{example}{Example}
\newtheorem{definition}{Definition}
\newcommand\lrceil[2]{\lceil #1 / #2 \rceil}
\newcommand{\A}{\mathcal A}
\DeclareMathSymbol{\lsb@l}{\mathalpha}{letters}{`l}
\title{Online and quasi-online colorings of wedges and intervals}
\date{}
\author{
Bal\'azs Keszegh\thanks{Alfr\'ed R\'enyi Institute of Mathematics,
  Hungarian Academy of Sciences, Budapest, Hungary. Email:
\texttt{keszegh.balazs@renyi.mta.hu}. 
Research supported by Hungarian National Science Fund (OTKA), under grant PD 108406, NN 102029 (EUROGIGA project GraDR 10-EuroGIGA-OP-003) and by the J\'anos Bolyai Research Scholarship of the Hungarian Academy of Sciences.}
\and Nathan Lemons\thanks{Los Alamos National Laboratory, Theoretical Division. Email: \texttt{nlemons@lanl.gov}.} \and D\"om\"ot\"or P\'alv\"olgyi\thanks{Department of Computer Science, E\"otv\"os University. Email:\texttt{dom@cs.elte.hu}. Research supported by Hungarian National Science Fund (OTKA), under grant PD 104386, NN 102029 (EUROGIGA project GraDR 10-EuroGIGA-OP-003) and by the J\'anos Bolyai Research Scholarship of the Hungarian Academy of Sciences.}
}
\begin{document}
\maketitle


\abstract
We consider proper online colorings of hypergraphs defined by geometric regions.  We prove that
there is an online coloring algorithm that colors $N$ intervals of the real line using $\Theta(\log
N/k)$ colors such that for every point $p$, contained in at least $k$ intervals, not all the
intervals containing $p$ have the same color. We also prove the corresponding result about online
coloring a family of wedges (quadrants) in the plane that are the translates of a given fixed wedge. These results contrast the results of the first and
third author showing that in the quasi-online setting $12$ colors are enough to color wedges
(independent of $N$ and $k$). We also consider quasi-online coloring of intervals. In all cases we present efficient coloring algorithms.

\section{Introduction}\label{intro}

The study of proper colorings of geometric hypergraphs has attracted much attention, not only
because this is a very basic and natural theoretical problem but also because such problems have
important applications.  One such application area is resource allocation: to determine the number
of CPUs necessary to run several jobs, each with fixed starting and stopping times is exactly the
problem of finding the chromatic number of the associated interval graph.  Similarly, the coloring
of geometric shapes in the plane is related to the problems of cover decomposability and conflict
free colorings; these problems have applications in sensor networks and frequency assignment as well
as other areas.  For surveys on these and related problems see Refs.~\cite{domisurvey,cdsurvey,cfsurvey,trottersurvey}.

Despite the well-known applications of colorings of geometric graphs and hypergraphs, relatively
little attention has been paid to the online and quasi-online versions of these problems.  Online
and quasi online coloring problems are natural to consider from both a theoretical point of view (as
a means to better understand and relate various geometrical hypergraphs) as well as from a practical
point of view: many of the natural applications require streaming/online algorithms.  

Before describing the contributions of this paper, we formally define the hypergraphs and colorings
under consideration.

\begin{definition}\label{def:geometric_objects}
\hfill
\begin{description}
\item[Wedge:] the set of points $\{(x,y)\in \mathbb{R}\times \mathbb{R} \mid x< x_0;
y< y_0 \}$ for a fixed $(x_0, y_0)$ called the apex.\footnote{Here, and similarly at the later definitions as well, we also allow $x_0=\infty$ and $y_0=\infty$.}
\item[Octant:] the set of points $\{(x,y,z)\in \mathbb{R}\times \mathbb{R}\times \mathbb{R}\mid
x< x_0; y< y_0;z< z_0 \}$ for fixed $x_0, y_0, z_0$.
\item[Bottomless rectangle:] the set of points $\{(x,y)\in \mathbb{R}\times \mathbb{R}\mid x_0< x<
x_1; y< y_0\}$ for fixed $x_0, x_1, y_0$.
\item[Interval:] the set of points $\{x\in \mathbb{R}\mid x_0< x< x_1\}$ for fixed
$x_0, x_1$.
\item[Diagonal line:] the set of points $\{x\in \mathbb{R}\times \mathbb{R}\mid x+y=c\}$ for fixed
$c$.
\end{description}
\end{definition}

Each of the above geometric objects defines a natural class of objects: for example the class of
wedges in $\mathbb{R}^2$ or the class of intervals in $\mathbb{R}$.
We consider hypergraphs which can be induced through these classes of geometrical objects.

Let $S$ be a set and let $\mathcal{O}$ be a family of its subsets.
For any finite subset $X$ of $S$, the primal hypergraph induced by $X$ and $O$ is the following. Its vertices correspond to the points in $X$ and its hyperedges correspond to those subsets of $X$ that can be obtained as the intersection of $X$ with a member of $\mathcal {O}$. More precisely:
  
\begin{definition}[Primal Hypergraph Construction]
For a base set $S$ and family $\mathcal {O}$ of subsets of $S$, the points $x_1,x_2,\ldots,x_n\in S$ induce, with respect to $\mathcal{O}$, a \emph{primal hypergraph}, $H$, on $n$ vertices $v_1,\ldots, v_n$ where for each $I\subset 2^{[n]}$
$$e_I=\{v_{i}:i\in I\} \mbox{ is a hyperedge of $H$ iff } \exists O\in\mathcal{O}:\;O\cap\{x_1,x_2,\ldots,x_n\} = \{x_i:i\in I\}.$$
\end{definition}

For geometric objects, $S$ is the space in which the objects are defined in Definition~\ref{def:geometric_objects},
e.g., for ``wedges'' 
it is $\mathbb{R}^2$. 

\begin{example}
Let $S=\mathbb{R}^2$  and let $\mathcal{O}$ be the collection of all wedges.
Then the points $(0,0),(1,0)$ and $(0,1)$ induce the primal hypergraph $H$ consisting
of the hyperedges $\{v_1,v_2,v_3\}$, $\{v_1,v_2\}$, $\{v_1,v_3\}$, $\{v_1\}$, and $\emptyset$.
\end{example} 
 
There is a second, dual way to create a hypergraph from a set system. Let $S$ be a set and let $\mathcal{O}$ be a family of its subsets.
For any $\mathcal{O}'$ finite subfamily of $\mathcal{O}$, the dual hypergraph induced by $\mathcal{O}'$ with respect to $S$ is the following. Its vertices correspond to the sets in $\mathcal{O}'$ and its hyperedges correspond to those subfamilies of $\mathcal{O}'$ that can be obtained as the subfamily of sets in $\mathcal{O}'$ that contain a point $x$ of $S$. More precisely:

\begin{definition}[Dual Hypergraph Construction]
For a base set $S$ and family $\mathcal {O}$ of subsets of $S$, the objects $O_1,O_2,\ldots,O_n\in \mathcal{O}$ induce with respect to $\mathcal{S}$ a \emph{dual hypergraph}, $H$, on $n$ vertices $v_1,\ldots, v_n$ where for each $I\subset 2^{[n]}$
$$e_I=\{v_{i}:i\in I\} \mbox{ is a hyperedge of $H$ iff } \exists x\in S:\;\{i:x\in O_i, 1\leq i \leq
n\} = I.$$
\end{definition}

In general, for a fixed geometric space $S$ and set of objects $\mathcal{O}$, the class of
hypergraphs which can be formed through the primal construction is not the same as the class of
hypergraphs which can be formed through the dual construction. However, as observed in Pach \cite{P80}, in some important cases, these two classes are actually the same.  

\begin{proposition}
Let $\mathcal{O}$ be the family of the translates of some fixed Euclidean geometric set, e.g., a wedge $W$.
If $H$ is a hypergraph induced (through the primal construction) by the points $x_1,x_2,\ldots,x_n$, then there exist $O_1,O_2,\ldots,O_n\in \mathcal{O}$ which also induce $H$ through the dual construction.
Similarly if $H$ is a geometric hypergraph induced (through the dual construction) by the wedges $O_1,O_2,\ldots,O_n\in \mathcal{O}$, then there exist points $x_1,x_2,\ldots,x_n$ which also induce $H$ through the primal construction.
\end{proposition} 
\begin{proof}
Fix a point $c\in W$ which we call the {\em center} of $W$ and we say that $W$ is {\em centered} at $c$.
Denote the centrally reflected translates of $W$ by $\bar W$ and call the reflection of the center of $W$ the center of $\bar W$.
Consider the operation $\Psi$ that takes each wedge to its center and each point $c$ to a reflected translate $\bar W$ centered on $c$.
By definition, $\Psi$ preserves point-object incidences.
As the family of translates of $W$ and $\bar W$ induce the same hypergraphs, we have proved the equivalence.
\end{proof} 

\begin{definition}
Given a finite hypergraph $H$, a (partial) coloring of the vertices of $H$ is a {\em $k$-proper (partial) $c$-coloring} if it uses $c$ colors and no hyperedge of size at least $k$ is monochromatic.
\end{definition}

When 
obvious from the context, we may refer to a $k$-proper (partial) $c$-coloring simply as a {\em proper coloring}.
We will consider the proper coloring problem for both geometric hypergraphs induced by the primal as well as the dual constructions.
To simplify the exposition, we will avoid referring to the hypergraph $H$ explicitly.
Rather we will speak of coloring points with respect to objects (primal construction) or of coloring objects with respect to points (dual construction).
In particular, if the points/vertices are colored in a primal construction we say that an object is monochromatic if the corresponding hyperedge is monochromatic. The {\em size of a geometric object} (e.g., size of a wedge) will refer to the number of points in the geometric object in the primal construction, and the {\em depth of a point} will refer to the number of objects containing a point in the dual construction.

In online coloring problems, the set of objects to be colored is not known beforehand; objects come to be colored one-by-one and a proper coloring must be maintained at all times. 
This problem has several variants, below we give an exact definition of the types interesting to us.
To emphasize the difference, we refer to proper colorings as {\em offline colorings}.

\begin{definition}
Let $H$ be a hypergraph on $n$ vertices and let $v_1,v_2,\ldots,v_n$ be an ordering of the vertices.
For each $i$ let $H_i$ be the hypergraph on the vertices $V_i=\{v_1,v_2,\ldots,v_i\}$ with edges
$\{e|_{V_i},\;e\in E(H)\}$.
A $k$-proper $c$-coloring algorithm $\A$ of $H$ for which each $H_i$ is $k$-properly partially
$c$-colored is called 
\begin{description}
\item[online] if at the beginning $\A$ knows nothing about $H$, in step $i$, $H_i$ is presented to
$\A$ and $\A$ colors $v_i$ (the vertices $v_j$ for $j<i$ retain their colors from the previous steps).
Note that $\A$ knows nothing about the future vertices $v_j, j>i$;
\item[semi-online] if at the beginning $\A$ knows nothing about $H$, in step $i$ $H_i$ is presented
to $\A$ and $\A$ colors some (maybe zero) of the as yet uncolored vertices (the vertices $v_j$ for $j<i$
retain their colors from the previous steps).  Again $\A$ knows nothing about the
vertices that come later;
\item[quasi-online\footnotemark]\footnotetext{Quasi-online colorings are also known as colorings of
{\em dynamic} point sets \cite{A13} or as colorings of ordered point sets \cite{cd,multicd}. We
shall use the quasi-online notation to emphasize that it lies between the offline and online
coloring models.} if $\A$ colors the vertices knowing the full hypergraph $H$.
\end{description}
\end{definition}

By definition the above types are ordered by hardness in the way they are presented.

\begin{observation}
Every online algorithm is also a semi-online algorithm. Every semi-online algorithm is also a quasi-online algorithm. Every quasi-online algorithm is also an offline algorithm.
\end{observation}

Usually semi-online algorithms are presented when a quasi-online algorithm is needed (e.g., for coloring points with respect to intervals \cite{A13}), yet in other cases this is not a possible route as a quasi-online algorithm exists while a semi-online algorithm does not (e.g., for coloring wedges \cite{cd,colorful2}).
In this paper we consider quasi-online and online colorings of wedges and intervals.

One major motivation to study quasi-online colorings is that it can be used to solve corresponding
offline higher dimensional problems. In particular, it was shown that octants can be offline colored
using two colors such that there is no monochromatic octant of size at least $12$ \cite{cd}. Indeed
by projecting the octants on the $xy$ plane and ordering them by the $z$-coordinates of their
apices, it was shown that quasi-online coloring the resulting wedges is equivalent to offline coloring the original octants \cite{cd}. 

Knowing that one can properly color wedges quasi-online using a constant number of colors, motivated Tardos \cite{tardos} to ask whether a proper coloring can be achieved in the online setting also, possibly with a larger $k$ and more colors. It is easy to see that $2$ colors are not enough to guarantee non-monochromatic wedges (i.e., there may be arbitrarily large monochromatic wedges), even when the points are restricted to a diagonal line. While it is possible to $2$-properly $3$-color if the points are restricted to a diagonal line \cite{cd}, for general point sets the answer turns out to be more complicated. 

Answering the question of Tardos, Theorem \ref{thm:noonline} shows that in general no finite number
of colors are enough.
Formally, for any $c$ and $k$, and any online $c$-coloring algorithm, there exists a finite set of points in the plane for which the algorithm produces a monochromatic wedge of size at least $k$. This implies that the same holds for coloring intervals with respect to points, that is, there is no online algorithm that $k$-properly $c$-colors intervals with respect to points. 

In \cite{colorful2} it was proved (independently to and after us, but with very similar methods) that for any $c$ and $k$, and any semi-online $c$-coloring algorithm, there exists a finite set of points for which the algorithm produces a monochromatic wedge of size at least $k$, thus, this stronger version of Theorem \ref{thm:noonline} remains true.
In \cite{colorful2} it was also shown that there is no semi-online algorithm that $k$-properly $c$-colors intervals with respect to points.

Knowing that a constant number of colors are not enough in the online setting, we can ask for the dependence of the needed number of colors $c$ on the number of points $N$ and on $k$. We consider the cases when either $c$ or $k$ is fixed. For $c=3$ fixed, Theorem \ref{3color} determines exactly the maximum number of points that can always be $k$-properly $3$-colored online, the answer is quadratic in $k$. If $c\ge 4$ is fixed or if $k$ is fixed, the behavior is different, Theorem \ref{constantc} shows that the maximum number of points that can always be $k$-properly $c$-colored is exponential in $ck$. Theorem \ref{constantk} gives an online coloring algorithm which achieves this even without knowing the number of points in advance, that is for an arbitrary point set at an arbitrary step $N$, the $N$ points are $k$-properly $c$-colored using $c=\Theta(\log N/k)$ colors. Recall that the primal and dual problems are equivalent for wedges.

In Section \ref{online-intervals} we show how our results on properly coloring wedges online yield the same results about proper coloring intervals online. Recall that the dual problem of online coloring points with respect to intervals is not equivalent with the primal problem of coloring intervals. Moreover, the online version of the dual problem is not really interesting as it is easy to see that two colors are not enough to properly color points with respect to intervals, whatever we choose $k$, while $3$ colors are already enough for any point set, even for $k=2$ \cite{cd}. Note that this is equivalent to the aforementioned problem of properly coloring points on the diagonal line with respect to wedges.


So far we investigated primal and dual versions of {\em online} coloring wedges and intervals.
In \cite{cd} {\em quasi-online} coloring wedges was investigated (in which case the problems in the primal and dual settings are equivalent). 
Similarly as in the case of octants and wedges, coloring quasi-online intervals is equivalent to (offline) coloring bottomless rectangles and also coloring points quasi-online with respect to intervals is equivalent to (offline) coloring points with respect to bottomless rectangles. Both of these were regarded in \cite{keszegh,wcf} and exact results were proved. However, in the primal version the coloring algorithms were overly complicated and computationally not efficient, thus in Section \ref{quasi-online} we revisit this topic and give simpler and efficient algorithms to properly color intervals quasi-online (and thus also to offline properly color bottomless rectangles). The proofs are different from the ones in \cite{keszegh,wcf} and utilize and generalize the tool from \cite{cd} to online build a tree whose offline coloring gives the desired quasi-online coloring. Thus, they also serve as further demonstrations for the usefulness of this tool for quasi-online coloring problems.


\section{Online coloring wedges and intervals}\label{online}

\subsection{Online coloring wedges}
Our first result is a negative answer to the question of Tardos \cite{tardos}.

\begin{theorem}\label{thm:noonline}
For every $c$ and $k$ and every online $c$-coloring algorithm there exists an ordered set of
$N=2^{ck}-1$ points for which the algorithm produces a monochromatic wedge of size at least $k+1$.
\end{theorem}


We start with some definitions.


\begin{definition}\label{def:south-east}
Let $A$ and $B$ be disjoint sets of points in the plane.  We say $A$ is \emph{south-east} of $B$ if
there exist $x_0,y_0\in\mathbb{R}$ such that 
\begin{enumerate}
\item $\forall (x,y)\in A,\;\; x > x_0 \mbox{ and } y < y_0$, 
\item $\forall (x,y)\in B,\;\; x < x_0 \mbox{ and } y > y_0$.
\end{enumerate} 
\end{definition} 

We will use the other 3 directions, north-west, south-west, and north-east in a similar manner.

The next definition is similar to what was used for an unrelated problem in \cite{panos} and recently (independently) in \cite{colorful2}.

\begin{definition}
For a collection of $c$-colored points in the plane, we define the associated {\em color-vector} to be a vector of length $c$ where the $i^\text{th}$ coordinate is the size of a largest (containing most points) wedge consisting only of points with color $i$.
The {\em size} of the color-vector is the sum of its coordinates.
\end{definition}

To prove Theorem \ref{thm:noonline} we prove a stronger statement, which immediately implies Theorem \ref{thm:noonline}.

\begin{lemma}\label{wedgevector}
For $n\geq 2$ and for any online $c$-coloring algorithm there exists an ordered set of $N=2^n-1$
points for which the algorithm will produce a coloring whose associated color-vector has size at least $n+1$.
\end{lemma}

Given an online coloring algorithm, we show how to explicitly produce such an ordered set of points.
In particular, we give an inductive method of generating the ordered set of points: the position of the
$n^\text{th}$ point will be determined by the coloring the algorithm gives to the first $n-1$ points.  

\begin{proof}
We prove by induction on $n$. 
When $n=2$, we must produce an ordered set of $3$ points.
These will all be placed on the line $\ell=\{(x,y)\mid y=-x\}$.
Place the first two points at distinct positions on the line $\ell$.
If they are given the same color by the algorithm, place the third point south-east of the first two (and on the line $\ell$).  Otherwise, if the
first two points are given different colors by the algorithm, place the third point on the line $\ell$ between the first two points.  In either case, the color-vector of the resulting colored point set will have size $3$.

By the inductive hypothesis, using at most $2^{n-1}-1$ points, we can produce a set $S_1$ for which the
algorithm produces a color-vector of size at least $n$. Continuing we can
produce a second disjoint set, $S_2$, south-east from $S_1$ again using at most $2^{n-1}-1$ points for
which the algorithm produces a color-vector of size at least $n$.   If the two color-vectors are
different, then the whole point set $S_1\cup S_2$ has a color-vector of size at least $n+1$.
Otherwise, if the color vectors are the same, then we put an additional point $p$ as follows.  As
$S_2$  is south-east from $S_1$, let $x_0$ and $y_0$ be as in Definition \ref{def:south-east}.  Then
we let $p=(x,y_0)$ where $x=\min\{x\mid (x,y)\in S_1\}$.
Note that $p$ is south-west from $S_1$ and that $S_2$ is south-east from $p$. 
Then as this point is colored with some color, $i$, the $i^{\text{th}}$ coordinate of the
color-vector of $S_1\cup\{p\}$ is one bigger than the $i^\text{th}$ coordinate of the color-vector of $S_1$
(the rest of its coordinates is $0$.) By the monochromatic wedge corresponding to this coordinate
(containing $p$) together with the monochromatic wedges guaranteed by the color-vector of $S_2$, we
get that $S_1\cup S_2\cup\{p\}$ has a color-vector of size at least $n+1$. Altogether we used at
most $2(2^{n-1}-1)+1=2^{n}-1$ points, as desired. 
\end{proof}

What happens if $c$ or $k$ is fixed? The case when $c=2$ was considered, e.g., in \cite{cd}.
It is not hard to see that using $2k-1$ points, the size of the largest monochromatic wedge can be forced to be at least $k$ and this is the best possible.
The next theorem states that For $c=3$ exactly $k^2$ points are needed to force a monochromatic wedge of size $k$.

\begin{theorem}\label{3color} 
For any $k>0$ and any online $3$-coloring algorithm there exists a set of $k^2$ points for which the
algorithm produces a monochromatic wedge of size $k$.  This is best possible as for any $k>0$ there exists an online $3$-coloring algorithm which colors any ordered set of
$k^2-1$ points without producing a monochromatic wedge of size $k$.
\end{theorem}

Before the proof, we need a few more definitions.

\begin{figure}
\begin{center}
\includegraphics[width=6cm]{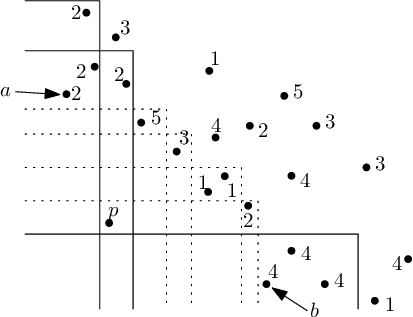}
\end{center}
\caption{The point $p$ borders the $3$ mamos with color $2$ and $4$ denoted by solid lines 
and threatens the mamos denoted by dotted lines. These are the mamos that $p$ is a potential member of.
The points $a$ and $b$ are as required in Proposition \ref{mamoneighbor}.
In a general step of the coloring in the proof of Theorem \ref{constantc} the new vertex $p$ cannot be colored $2$ or $4$. By the
order of our preference $p$ is colored with $3$, thus introducing a (monochromatic) wedge of size $2$.}
\label{fig:addvertex}
\end{figure}

\begin{definition}
Let $X$ be a set of colored points in the plane. 
A non-empty wedge $W$ is {\em maximal monochromatic}, or simply {\em mamo}, if it is monochromatic and there is no monochromatic wedge that contains it.
Two mamos are called {\em neighbors} if they are contained within a larger (non-monochromatic) wedge which contains no other mamo. 
For a new point $p\notin X$ and a (not necessarily maximal) monochromatic wedge $W$ 
we say that 

\begin{itemize}
\item $p$ {\em threatens} $W$ if all points of $W\cap X$ are north-east from $p$;
\item $p$ {\em borders} $W$ if $p$ does not threaten $W$, and there is a wedge that contains $p$ and some (possibly all) points of $W\cap X$, but no point from $X\setminus W$;
\item $p$ is a {\em potential member} of $W$ if $p$ threatens or borders $W$.
\end{itemize}
\end{definition}

For an example see Figure \ref{fig:addvertex}, where for visual readability mamos are slightly shrinked (by doing that the hyperedges induced by the monochromatic wedges remain the same).

\begin{definition}
If during an online coloring algorithm at time $t$ the point $p$ arrives, then it is initially {\em not destroyed}. Further, after coloring $p$ at step $t$, $p$ {\em destroys} all points (and thus such a point becomes {\em destroyed} at this step) that are north-east from $p$, were not destroyed at an earlier step and are colored a different color than $p$. If $p$ gets a color that differs from the color of a point south-west from it, then $p$ is also destroyed (by this point).

Similarly, a wedge $W$ which is monochromatic before step $t$ is {\em destroyed} by $p$ in step $t$ if after step $t$ there is no monochromatic wedge with the same set of points as $W$. 
\end{definition}

Note that during an online coloring algorithm a point is destroyed at most once and when it is destroyed it cannot be in a monochromatic wedge anymore.


\begin{proposition}\label{mamoneighbor} In a colored point set $X$, if for a point $p\notin X$ there is no point of $X$ south-west from $p$, then there are two colors, such that any mamo that $p$ borders, are of one of these colors. 
\end{proposition}
\begin{proof}
Let $a$ be a southern-most point of the set of points in $X$ which are north-west of $p$.
Let $b$ be a western-most point of the set of points in $X$ which are south-east of $p$.  
Given a mamo $W$ of $X$ that contains $p$, if it contains $a$ or $b$, then the color of $W$ is the same as the color of $a$ or $b$.
Otherwise, $W$ contains only points of $X$ that are north-east from $p$, i.e., $W$ is is threatened by $p$ (and thus not bordered by $p$). For an example see Figure \ref{fig:addvertex}.
\end{proof}

The colors of the mamos bordering a point are denoted the \emph{border colors} in the sequel.

\begin{proof}[Proof of Theorem \ref{3color}]
To prove the first statement, 
we will maintain three (possibly empty) monochromatic wedges, $L_t$, $M_t$, and $R_t$, of distinct colors, such that 
the points from $M_t$ are south-east of the points from $L_t$ and the points of $R_t$ are south-east of the points of $M_t$.
At the beginning all three wedges are empty, $L_0=M_0=R_0=\emptyset$.
Denote the size of $L_t$, $M_t$, and $R_t$ by $l_t, m_t$, and $r_t$, respectively.
We maintain that ${l_t+1\choose 2} + m_t + {r_t+1\choose 2}$ increases by at least one with the addition of each new point.
This implies that after $k^2$ steps at least one of the values will be $k$, since if $l_t=m_t=r_t\le k-1$, then the expression is at most ${k\choose 2} + (k-1) + {k\choose 2}=k^2-1$.

If at any time $m_{t}>r_{t}$, then we change to the wedges $L_t=L_{t}$, $M_t=\emptyset$, and $R_t=M_{t}$.
This preserves the condition and ${l_t+1\choose 2} + m_t + {r_t+1\choose 2}$ cannot decrease. 
We similarly proceed if $m_t>l_t$. Thus we can assume that $m_{t}\le\min \{l_{t},r_{t}\}$.

\begin{figure}
\begin{center}
\includegraphics[height=6cm]{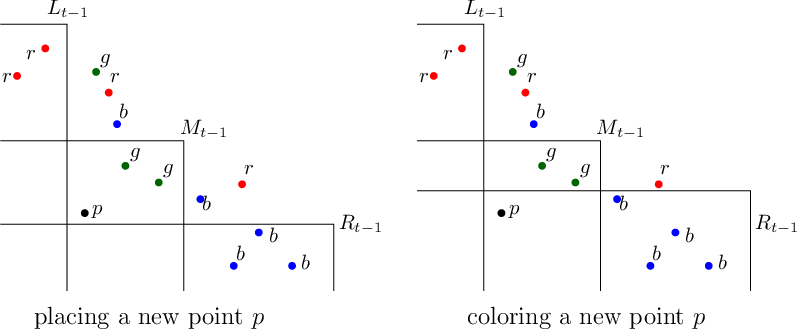}
\end{center}
\caption{First and second part of the Proof of Theorem \ref{3color}}\label{fig:add3col}
\end{figure}

 We place a new point $p$ south-west of the points in $M_{t-1}$ but south-east from the points of $L_{t-1}$ and north-west from the points of $R_{t-1}$ (see left of Figure \ref{fig:add3col}).
This way $p$ is a potential member of all three wedges but only threatens $M_{t-1}$.
For any coloring of $p$, we have to pick $L_t$, $M_t$, and $R_t$ such that ${l_t+1\choose 2} + m_t + {r_t+1\choose 2}>{l_{t-1}+1\choose 2} + m_{t-1} + {r_{t-1}+1\choose 2}$.

If $p$ is given the color of $M_{t-1}$, then let $L_t=L_{t-1}$, $M_t=M_{t-1}\cup \{p\}$, and $R_t=R_{t-1}$, the sum increases by one.
Otherwise $p$ is colored with the color of, say, $L_{t-1}$.
In this case let $L_t=L_{t-1}\cup \{p\}$, $M_t=\emptyset$, and $R_t=R_{t-1}$.
Now 
\begin{align*}
{l_t+1\choose 2} + m_t + {r_t+1\choose 2}&= {l_{t-1}+2\choose 2} + 0 + {r_t+1\choose 2}\\
&={l_{t-1}+1\choose 2} + l_{t-1} +1 + {r_{t-1}+1\choose 2}\\
&\ge {l_{t-1}+1\choose 2} + m_{t-1}+1 + {r_{t-1}+1\choose 2}.
\end{align*}

To prove the second statement, we must assign colors online to at most $k^2-1$ points to avoid a monochromatic wedge of size $k$.
If at time $t$ the new point $p$ is north-east from some earlier point $q$ then we color it to a different color from $q$, this way we do not introduce new monochromatic wedges (but may destroy some). Otherwise, using Proposition \ref{mamoneighbor}, when the new point, $p$, arrives, there are two colors that bordering mamos can have.
Consider the largest size of these of each color and denote them by $L_{t-1}$ and $R_{t-1}$, and their sizes by $l_{t-1}$ and $r_{t-1}$ such that $l_{t-1}\le r_{t-1}$, and let their colors be red and blue. 
Denote by $m_{t-1}$ the size of the largest mamo $M_{t-1}$ threatened by $p$ having the third color, green. See right of Figure \ref{fig:add3col}.
If $l_{t-1}\le m_{t-1}$, then color $p$ red, otherwise color it green. That is, we always color $p$ to the color of a smallest mamo among the three differently colored mamos $L_{t-1},M_{t-1},R_{t-1}$ of which $p$ is a potential member.

We claim that if $a_{t}\le b_{t}$ are the sizes of the largest pair of mamos with different colors at the end of the step at time $t$, then at least ${a_t\choose 2} +{b_t\choose 2}$ points have been destroyed by the end of the step at time $t$.
To prove this, first see that at one step we add and color only one point thus at most one of $a_t$ and $b_t$ can increase and only by at most $1$. Further, 
${a_t\choose 2} +{b_t\choose 2}$ can be greater than ${a_{t-1}\choose 2} +{b_{t-1}\choose 2}$ only if all three of $m_{t-1}$,
$l_{t-1}$, and $r_{t-1}$ are at least $a_{t-1}$, and at least two of them is equal to $a_{t-1}$. 
In this case $\{a_t,b_t\}=\{a_{t-1}+1,b_{t-1}\}$ (as an unordered pair of integers) and we color the new point $p$ red and destroy at least
$a_{t-1}={a_{t-1}+1\choose 2} +{b_{t-1}\choose 2}-\left({a_{t-1}\choose 2} +{b_{t-1}\choose 2}\right)$ green points, proving the claim.

Suppose the first time we obtain a mamo of size $k$ is at the end of step $t$, then we must have $a_{t-1}=b_{t-1}=k-1$, thus we destroyed by the end of step at time $t-1$ at
least $2{k-1\choose 2}$ points. Further, we must have $l_{t-1}=r_{t-1}=m_{t-1}=k-1$, thus the three differently colored mamos $L_{t-1},M_{t-1},R_{t-1}$ contain $3(k-1)$ non-destroyed points. 
Together with the $1$ point we add at step $t$, in total we have at least $2{k-1\choose 2}+3(k-1)+1=k^2$ points.
\end{proof}


For $c\ge 4$ we can give an exponential (in $ck$) lower bound for the number of points we can color. 

\begin{theorem}\label{constantc}
For $c\ge 4$ we can color online with $c$ colors any set of at most $1.22074^{c(k-2)+1}$ points such that throughout the process there is no monochromatic wedge of size $k$.
Moreover, if $c$ is large enough, then we can even color online any set of at most $1.46557^{c(k-2)+1}$ points without creating a monochromatic wedge of size $k$.
For large $c$, we can color online any set of at most $1.46557^{c}$ points without creating any monochromatic wedges of size $2$.
\end{theorem}

Before proving the theorem we introduce some notations.
\begin{definition}
Let $\A$ be an online coloring algorithm of points in $\mathbb{R}^2$.  Before step $t$, let 
$p_1,p_2\ldots,p_{t-1}$ be the $t-1$ points the algorithm has colored so far. In step $t$ the point $p_t$ appears and is colored. Define the \emph{weights} of the points after step $t$, $w_t(p_i)$, $i\le t$ as
\begin{equation}
w_t(p_i) =\begin{cases}
& 0, \textrm{ if $p_i$ is destroyed by $p_t$ in step  $t$,}\\
& 1+\sum_{p_j \textrm{ is destroyed by $p_t$ in step } t} w_{t-1}(p_j), \textrm{ if } i=t,\\
& w_{t-1}(p_i) \textrm{ otherwise.}
\end{cases} 
\end{equation}
The \emph{weight of a wedge} is the sum of the weights of the points in the wedge. 

Given an online coloring algorithm $\A$, let $w_\A(i,j)$ be the minimal weight of a monochromatic wedge of size $i$ and color $\ge j$ OR size $>i$ and any color over all online point sets colored using algorithm $\A$. If $\A$ is such that there is never such a wedge, then define $w_\A(i,j)=\infty$.
\end{definition} 

Note that after any step $t$ a point that was destroyed at any earlier time has zero weight and that the sum of all the weights after any time $t$ is always equal to the number of arrived points $t$. Note that $w_\A(i,1)$ is a lower bound on the minimal number of points in an online point set which contains a monochromatic wedge of size $i$ when colored by $\A$. 

\begin{proof}[Proof of Theorem \ref{constantc}]
First we describe the algorithm how to color a new point.
If the new point $p$ is north-east from an earlier point, it is given a different color from one such point.
In this case no new monochromatic wedge, and in particular, no new mamo is created.  
Using Proposition \ref{mamoneighbor}, for any new point $p$, there are at most two colors (the border
colors) such that any mamos that $p$ borders, are of one of these colors.
Every other mamo containing $p$ is threatened.
Choose from the non-border colors the color which first minimizes the size of the largest mamo containing $p$ and secondly minimizes the color (as a number from 1 to $c$).
Thus our preference is first to have a size $1$ wedge of color $1$, then a size $1$ wedge of color $2$,
$\ldots$, size $1$ wedge of color $c$, size $2$ wedge of color $1$, $\ldots$ etc. (with border colors excluded).
Figure \ref{fig:addvertex} is an illustration of the algorithm for a 5-coloring.

We now show that this coloring algorithm $\A$ can indeed color the requisite number of points without creating a wedge of size $k$ or more.
We will give a lower bound on $w_\A(i,j)$ as $w_\A(k,1)$ lower bounds the number of points required to make a monochromatic wedge of size $k$.
To simplify notation, define $b_{c(i-1)+j}=w_\A(i,j)$ for $1\leq j\leq c$ and $b_n=0$ for $n\le 0$.
It follows from the definition that $b_i\le b_j$ if $i\le j$.
Our goal is to give a good lower bound on $w_\A(k,1)=b_{c(k-1)+1}$.

For a mamo of size $i$ and color $j$, order its points in the order as they appeared $p_1,\dots p_i$. Consider the time when the $h^{\text{th}}$ point, $p_h$ is added as a new point.
Notice that when $p_h$ arrives as a new point and we color it, all points of many, previously monochromatic, possibly intersecting wedges will be destroyed.
More precisely, from our preferences we have that all points of at least $c-3$ mamos of different colors that are ``almost as big'' as the one we create by adding $p$ are destroyed.
These sizes are at least $h-1$ (and can be more, as when we add $p$ we might create a bigger mamo than $h$) and can be best expressed with the below formula using $b_n$.
Denote $r=c(h-1)+j$.
From the above, using that $b_n$ is a monotone increasing sequence, we can show that after adding $p_h$, its weight is at least $1+b_{r-3}+b_{r-4}+\ldots+b_{r-c+1}$.
Note that $b_{r-1}$ and $b_{r-2}$ are missing from this sum; this is because we had to choose the best available color that is not one of the (at most) two colors of a bordering mamo.
This leaves $c-2$ options, of which the mamo whose weight is smallest has weight at least $b_{r-c}$, the next has weight at least $b_{r-c+1}$ and so on, the last has weight at least $b_{r-3}$.
Therefore, after choosing the best color, we get that at this step $p_h$ destroys at least $b_{r-3}+b_{r-4}+\ldots+b_{r-c+1}$ points, that is 
$$w(p_h)\ge b_{r-3}+b_{r-4}+\ldots+b_{r-c+1}.$$

Therefore, by lower bounding the sum of the weights of the points in a mamo of size $i$ and color $j$, we lower bound $b_{c(i-1)+j}$ and get

$$b_{c(i-1)+j}\ge \sum_{h=1,\dots ,i} w(p_h)\ge \sum_{r=j,c+j,\ldots,c(i-1)+j} 1+b_{r-3}+b_{r-4}+\ldots+b_{r-c+1}.$$


To get a lower bound for $b_r$, we introduce $a_r$ with a non-homogeneous linear recursion $a_r = 1+a_{r-3}+a_{r-4}+\ldots+a_{r-c}$ starting with $a_r=0$ for $r\le 0$. Notice that for $r\le c$ we  even have $a_r = 1+a_{r-3}+a_{r-4}+\ldots+a_{r-c+1}$ as $a_{r-c}=0$ in this case.
We claim that we have $a_r\le b_r$: for $r\le 0$ this follows from the definition and for $r>0$ we can use induction with the above formula to get 

$$b_{c(i-1)+j}\ge \sum_{r=j,c+j,\ldots,c(i-1)+j} 1+b_{r-3}+b_{r-4}+\ldots+b_{r-c+1} \ge$$
$$\sum_{r=j,c+j,\ldots,c(i-1)+j} 1+a_{r-3}+a_{r-4}+\ldots+a_{r-c+1}=
\sum_{r=j,c+j,\ldots,c(i-1)+j} a_{r}-a_{r-c}= a_{c(i-1)+j}.$$

We can reduce the above recursion for $a_r$ to a homogeneous linear recursion in several standard ways: by using $a_{r+1}-a_r=a_{r-2}-a_{r-c}$; or if $c\ge 4$, by defining $a_r'=a_r+1/(c-3)$; or simply omitting the additive term as it anyway does not affect the order of magnitude.
Using either of the above, we can conclude that the magnitude of $a_r$, and thus of $b_r$, is at
least $q^r$ where $q$ is the real, root of the equation
\begin{equation}\label{eq:root-q}
q^{c}-q^{c-3}-q^{c-4}\ldots-1=0,
\end{equation} 
 which is unique if $c\ge 4$.  In order to get an explicit lower bound, notice that $a_r\ge 1\ge
q^{r-c}$ if $1\le r\le c$, and so from the recursion using (\ref{eq:root-q}) we also have $a_r\ge q^{r-c}$ for all $r$.
Among $c\ge 4$, this root is the smallest for $c=4$, in which case we look for the root of $q^4-q-1=0$, and a simple numerical calculation shows that this number is larger than $1.22074$.
Therefore we have $b_{c(k-1)+1}\ge 1.22074^{c(k-1)+1-c}$, just what was needed.
In fact, for $c=4$, the sequence we get is $a_r = 1+a_{r-3}+a_{r-4}$. 
Using standard methods, from the recursion we could determine the exact asymptotics of $a_r$ for any $c$.
As $c$ tends to infinity, the sequence $b_r$ is getting closer and closer to the Narayana's cows sequence $N_r = N_{r-1}+N_{r-3}$ \cite{OEIS}, and $q$ tends (from below) to the real root of $q^3-q^2-1=0$, which is bigger than $1.46557$.
Therefore, $a_r\ge 1.46557^r$, if $c$ is large enough.
From this we obtain that $a_{c(k-1)+1}\ge 1.46557^{c(k-2)+1}$ if $c$ is large.

To obtain bounds for $k=2$, we should be more careful with the first few terms of the sequence as our initial estimate $a_r\ge 1\ge q^{r-c}$ for $1\le r\le c$ is not strong enough.
The values for small $c$ can be calculated manually, while for larger values we can use the exact value for $N_r$, the $r^{\text{th}}$ term of the Narayana's cows sequence (by Benoit Cloitre \cite{OEIS}) to conclude that $a_r>0.6\cdot 1.465571^{r-1}-0.5$.
From this we obtain that $b_{c+1}\ge 1.46557^{c}$ if $c$ is large.
\end{proof}

In conclusion, if there are $c\ge 4$ colors, the smallest  number of points that could force a monochromatic wedge of size $k$ is exponential in $ck$.
These bounds can be used to estimate the size of the largest monochromatic wedge when coloring $N$ points with $c$ colors to be $\Theta(\log N/c)$ in the worst case.
Similarly for fixed $k\geq 1$, to avoid a monochromatic wedge of size $k$ when coloring $N$ points, $\Theta(\log N/k)$ colors are necessary and sufficient.

\begin{corollary}\label{constkcor}
There is an algorithm to color online $N$ points in the plane using $\Theta(\log N/k)$ colors such that all monochromatic wedges have size strictly less than $k$.
\end{corollary}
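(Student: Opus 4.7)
The plan is to derive Corollary \ref{constkcor} directly by combining the matching upper and lower bounds already established in this section, simply inverting the role of $c$ and $N$ so that the bound is expressed in terms of $c$ as a function of $N$ and $k$.

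For the upper bound, I would invoke Theorem \ref{constantc}: with $c$ colors the explicit online algorithm there successfully handles any sequence of $N = O(q^{ck})$ points (with $q \geq 1.22074$ for $c \ge 4$) while maintaining the invariant that every monochromatic wedge has size strictly less than $k$. Inverting this exponential relation, if $N$ is given then $c \leq \log_q N / k + O(1) = O(\log N / k)$ colors are sufficient to run the algorithm on an input of size $N$ while avoiding a monochromatic wedge of size $k$. For small values of $c$ (i.e., $c \le 3$) the bounds covered by Proposition \ref{3color} and the $c=2$ case remarked before it absorb into the $O(\cdot)$ constant.

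For the matching lower bound, I would apply the corollary following Theorem \ref{wedgevector}: there exists an adversarial placement of $N = 2^{ck}-1$ points on which every online $c$-coloring produces a monochromatic wedge of size at least $k+1$. Replacing $k$ by $k-1$ in that statement, any online coloring algorithm using only $c$ colors must produce a monochromatic wedge of size $\geq k$ on some sequence of $2^{c(k-1)}-1$ points. Hence, in order that an algorithm succeed on every input of size $N$, we must have $N < 2^{c(k-1)}-1$, which rearranges to $c > \log(N+1)/(k-1) = \Omega(\log N / k)$ colors being necessary.

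The two estimates together give $c = \Theta(\log N / k)$, which is precisely the statement of the corollary. The main ``obstacle'' is purely cosmetic: since Theorem \ref{constantc} is stated as ``$N$ as a function of $c, k$'' whereas the corollary is phrased as ``$c$ as a function of $N, k$'', one has to take logarithms carefully and check that the bases of the exponentials on the two sides (namely $2$ in the lower bound and $q \approx 1.22074$ in the upper bound) are both constants greater than $1$, so that they disappear inside the $\Theta$-notation. No new combinatorial idea is needed.
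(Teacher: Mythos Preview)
Your proposal is correct and matches the paper's own reasoning: the corollary is stated immediately after the paragraph summarizing that the upper bound from Theorem~\ref{constantc} and the lower bound from Theorem~\ref{wedgevector} (and its corollary) together pin down the optimal number of colors as $\Theta(\log N/k)$, and you have simply spelled out this inversion of the exponential bounds explicitly. No separate argument appears in the paper beyond that summary, so your write-up is in fact more detailed than, but identical in substance to, what the authors provide.
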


Recall that Lemma \ref{wedgevector} stated that $N=2^{n}-1$ points can always force a size $n+1$ color-vector.
Theorem \ref{constantc} implies a lower bound close to this upper bound. Indeed, fix, e.g., $c=4$
and $k=\lceil n/4\rceil$. If the number of points is at most $N=O(1.22074^{n})=O(1.22074^{ck})$,
then by Theorem \ref{constantc} there is an online coloring such that at any time there is no
monochromatic wedge of size $k$, thus the size of the color-vector is always at most $4(k-1)<n$.


Observe that the coloring algorithm in the proof of Theorem \ref{constantc} was oblivious to $k$, thus in fact it implies the following stronger statement.

\begin{corollary}\label{corconstantc}
For fixed $c\ge 4$ we can color a countable set of points such that for any $k$, and any $n<1.22074^{c(k-2)+1}$, the first $n$ points of the set are $k$-properly $c$-colored.
\end{corollary}

This gives an algorithm when $c$ is fixed. Suppose now that $k$ is fixed and we want to use as few
colors as possible without knowing in advance how many points will come, i.e., for fixed $k$ we want to minimize $c$ without knowing $N$. To solve this, we alter our previous algorithm.
(Note that similarly it is possible to adjust the algorithm for the cases when for an unknown $N$ we want to minimize $\min(c,k)$, or $ck$, and the answer is still logarithmic in $N$.)
All this comes with the price of loosing a bit on the base of the exponent.
The following theorem implies that for $k=2$ (and thus also for any $k\ge 2$) we can color online any set of $N=O(1.0905^{ck})$ points and if $k$ is big enough, then we can color any set of $N=O(1.1892^{ck})$ points without a monochromatic wedge of size $k$.

\begin{theorem}\label{constantk}
For fixed $k\ge 1$ we can color a countable set of points such that for any $c$, and any $n<2^{(k-1)\lrceil{(c-3)}{4}+1}-1$, the first $n$ points of the set are $k$-properly $c$-colored.
\end{theorem}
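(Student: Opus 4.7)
The strategy is to adapt the algorithm of Theorem \ref{constantc} to an infinite palette and then to bound, in terms of the number of revealed points, how many distinct colors the modified algorithm may have used. Since $k$ is fixed, we can insist that the algorithm never produces a monochromatic wedge of size $k$: whenever no previously used color is compatible with the new point (either by matching one of its two neighbor colors or by extending some wedge to size $k$), the algorithm simply allocates a fresh color. Thus the theorem reduces to proving that color $c+1$ cannot be introduced until at least $2^{\lfloor (c+1)/4 \rfloor (k-1)}$ points have been revealed.

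For the algorithm itself, I would keep the partition-set bookkeeping from Theorem \ref{constantc}. For a new point $p$, the algorithm identifies the at-most-two neighbor wedges and, for each color in the currently used palette, computes the size of the monochromatic wedge that would arise if $p$ were colored with that color. Among colors not used by a neighbor and not producing a size-$k$ wedge, it picks the one minimizing a preference rule on (size, color)-pairs. A fresh color is allocated only when no existing color qualifies, in which case $p$ forms a new singleton wedge.

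The analytic heart of the proof is a recursion. Following Theorem \ref{constantc}, let $A_{s,l}$ be the minimum size of the partition set of a wedge of size $s$ and color $l$ that the algorithm can be forced to build, and prove by induction on the preference rank of $(s,l)$ that
\[
A_{s,l} \ge 2^{\lfloor (l+1)/4 \rfloor (s-1)}.
\]
For the inductive step, at the moment such a wedge is created the preference rule guarantees that a specific set of smaller-rank wedges must have been spent, either absorbed into or killed by the new partition set. Summing the inductive lower bounds over these contributions and observing that each block of four consecutive colors produces a doubling yields the recursion, which iterates to the claimed bound. The theorem then follows on noting that introducing color $c+1$ forces the algorithm to have been on the verge of creating a size-$k$ wedge of some non-neighbor color $l \le c$, so that the running point set already has cardinality at least $A_{k,l}$; this lower bound is strongest when $l$ is taken as large as possible, giving the desired $2^{\lfloor (c+1)/4 \rfloor (k-1)}$.

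The main obstacle is calibrating the preference rule so that the factor $4$ in $\lfloor (l+1)/4 \rfloor$ emerges cleanly. In Theorem \ref{constantc} the recursion $B_i \ge 1 + B_{i-3} + \ldots + B_{i-c}$ had $c-2$ terms and produced the base $q_c \to 1.466$ as $c \to \infty$, reflecting the freedom gained from a large palette. Here the recursion must be independent of $c$, so only a bounded number of terms is available, and the base is forced down to $2^{1/4} \approx 1.189$; verifying that this rate is indeed achieved for every adversarial placement --- and that the factor is exactly $4$ rather than $3$ or $5$ --- is the delicate step.
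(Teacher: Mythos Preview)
Your high-level plan---adapt the algorithm of Theorem~\ref{constantc} to an infinite palette, keep the partition-set bookkeeping, and prove an inductive lower bound on $A_{s,l}$---is exactly the paper's route. What is missing is the one idea that makes the induction close, namely the \emph{specific} preference rule. The paper does not use a generic lexicographic order on (size,~color); it groups the colors into blocks of four and, crucially, runs through \emph{all} sizes $1,\ldots,k-1$ within a block before ever touching a color from the next block. This is what forces, at the moment a new block is opened, the prior destruction of at least two size-$(k{-}1)$ wedges from the previous block, and hence what makes the exponent pick up a full factor of $k-1$ per block. You allude to ``each block of four consecutive colors produces a doubling'' but never commit to this rule, and you flag exactly this calibration as ``the delicate step'' without carrying it out.

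The gap shows up concretely in your stated bound $A_{s,l}\ge 2^{\lfloor (l+1)/4\rfloor(s-1)}$: for $s=1$ it is trivial, so it tells you nothing about the cost of opening a new color. You try to repair this in the last paragraph by invoking $A_{k,l}$ for some non-neighbor $l\le c$, but (i) the algorithm never builds a size-$k$ wedge, so $A_{k,l}$ is not a quantity the run actually realizes, and (ii) even granting the heuristic, you only get some $l\ge c-2$, not $l=c$, so the exponent comes out as $\lfloor(c-1)/4\rfloor(k-1)$ rather than $\lfloor(c+1)/4\rfloor(k-1)$. The paper sidesteps both issues: with the block-of-four preference one proves $A_{i,j}\ge 2^{(k-1)(\lfloor j/4\rfloor+i-1)}$ and then reads off the theorem directly from $A_{1,c+1}\ge 2^{(k-1)\lfloor (c+1)/4\rfloor}$, with no need to reason about hypothetical size-$k$ wedges or to optimize over~$l$.
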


\begin{proof}
We need to define a coloring algorithm and prove that it uses many colors only if there are many points.
Both the coloring and the proof are similar to those in Theorem \ref{constantc}, we only need to change our preferences when coloring.
Because of this the analysis of the performance of the algorithm also differs slightly.
We fix a $c$ and an $N< 2^{(k-1)\lrceil{(c-3)}{4}+1}-1$ for which we will prove the claim of the
theorem (the coloring we define will not depend on $c$ or $N$, but only on $k$). Denote the colors by
the numbers $\{1,2,\ldots,c,\ldots \}$. 

We again avoid the colors of the mamos that border the new point $p$.
Denote by $c_p$ the color to be assigned to $p$ (as an integer.)
Our primary preference now is that we want to keep $\lfloor (c_p-1)/4\rfloor$ small. That is, we use one of the four colors $c_p$ that minimizes $\lfloor(c_p-1)/4\rfloor$ under the condition that using one of these colors we can avoid a monochromatic wedge of size $k$. 
Once we have these four colors, our secondary preference is that we choose the color from these four colors that minimizes the size of the largest mamo containing $p$.
This means that our order of preference is first to have size $1$ wedge of color $1$, $2$, $3$, or $4$, then a size $2$ wedge of color $1$--$4$, $\ldots$, a size $k-1$ wedge of color $1$--$4$, then a size $1$ wedge of color $5$--$8$, $\ldots$ etc.
These rules determine our algorithm, except for the choice when more than one of the four colors is possible according to our secondary preference, in which case the chosen color can be arbitrary.

Again let $\A$ be this algorithm and recall that $w_\A(i,j)$ refers to the minimal weight of a monochromatic wedge of size $i$ and color $\ge j$ or size $>i$ and any color in an online point set colored using algorithm $\A$.
To simplify notations let $b(i,j)=w_\A(i,4j)$.
Note that by the definition of $\A$, it makes no difference whether we consider $4j-3$, $4j-2$, $4j-1$, or $4j$, we get the same values, that is $w_\A(i,j)=b(i,\lrceil{j}{4})$.
Because of this, when it makes no difference, we simply write $4j$ for the color.

We only need to prove that $b(i,j)\ge 2^{(k-1)(j-1)+i}-1$ as this means that if the algorithm uses the color $c+1$, then we had at least $w_\A(1,c+1)=b(1,\lrceil{(c+1)}{4})\ge 2^{(k-1)\lrceil{(c-3)}{4}+1}-1>N$ points, a contradiction. 
We prove by induction, $b(1,1)=1$.
If we introduce a wedge of size $i>1$, we have to destroy all points of at least one mamo of size
$i-1$ that had a different color from the same $4$-set, and merged an old mamo of size $i-1$ that had the same color to the new mamo.

Similarly to the proof of Theorem \ref{constantc}, for a mamo of size $i$ and color $4j$, order its points in the order as they appeared $p_1,\dots p_i$ and consider the time when the $h^{\text{th}}$ point, $p_h$ is added as a new point.
From the above argument, we get $w(p_h)\ge 1+b(h-1,j)\ge 2^{(k-1)(j-1)+h-1}$ by induction.

Therefore, by lower bounding the sum of the weights of the points in a mamo of size $i$ and color $4j$, we get

$$b_{i,j}\ge \sum_{h=1}^i w(p_h)\ge \sum_{h=1}^i 2^{(k-1)(j-1)+h-1}=2^{(k-1)(j-1)+i}-1.$$

Finally, we have to check what happens if $i=1$, i.e., if we color a point $p$ with a color $4j$ that has not yet been used.
In this case we have to destroy all points of at least two mamos of size $k-1$ having a color from $4j-7$, $4j-6$, $4j-5$, or $4j-4$.
Thus $w(p)\ge 1+2b(k-1,j-1)\ge 1+2\cdot (2^{(k-1)(j-2)+k-1}-1)=2^{(k-1)(j-1)+1}-1$.
This means that any point of a wedge of color $4j$ has at least this much weight, therefore $b(1,j)\ge 2^{(k-1)(j-1)+1}-1$.
\end{proof}

\begin{proposition}\label{timeonline}
The online coloring algorithms guaranteed by the second part of Theorem \ref{3color}, Theorem \ref{constantc} and Theorem \ref{constantk} run in $O(n\log n)$ time to color the first $n$ points (even if we have a countable number of points and $n$ is not known in advance).
\end{proposition}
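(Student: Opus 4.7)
The plan is to implement all three algorithms on top of a single dynamic data structure: a balanced binary search tree $T$ of the current northeast staircase of placed points, keyed by $x$-coordinate. Each node of $T$ stores its point's color and a pointer to a small record for the maximal monochromatic wedge the point belongs to; each wedge-record stores that wedge's color and current size. Because staircase points have strictly decreasing $y$ as $x$ increases, predecessor/successor queries in $T$ return the geometric neighbors on the staircase.

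For each arriving point $p$ the first $O(\log n)$ work is a predecessor query in $T$ followed by a $y$-comparison, which decides whether $p$ is northeast of some earlier point. If so, each of the three algorithms assigns $p$ a color different from that of the dominator, and no wedge-record changes. Otherwise $p$ joins the new staircase: two queries identify $p$'s would-be neighbors $p_L, p_R$, and I delete from $T$ the contiguous block of points that $p$ now dominates. Since each point is deleted at most once over the entire run, the total cost of deletions amortizes to $O(\log n)$ per arrival. Then I apply the algorithm's preference rule to choose $p$'s color, insert $p$ into $T$, and update the constant number of wedge-records that are created, merged, split, or resized.

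The main obstacle is to verify that the color-selection step itself fits in $O(\log n)$. For Proposition \ref{3color} only a constant number of wedges are candidates and the rule is executed in constant time from their records. For Theorems \ref{constantc} and \ref{constantk} the preferences are a lexicographic minimum over (size, color) or (4-tuple index, size, color) triples; the crucial structural point, implicit in the correctness proofs, is that any wedge that can become the new wedge containing $p$ must be adjacent on the staircase to $p_L$ or $p_R$, since any other wedge is either geometrically disjoint from $p$ or already blocked by a point of a different color. Hence only $O(1)$ wedge-records compete for the lexicographic minimum, and the scan of color $4$-tuples in Theorem \ref{constantk} stops at the first tuple containing an admissible color, whose index is at most $\lfloor (c+1)/4\rfloor = O(\log n/(k-1))$ and is therefore itself $O(\log n)$. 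Summing these $O(\log n)$ amortized per-point costs over the $n$ arrivals yields the claimed $O(n\log n)$ bound, uniformly in whether $n$ is known in advance.
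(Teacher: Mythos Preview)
The paper omits the proof entirely (it says only that the proposition ``follows easily from the analysis of the algorithms''), so there is no detailed argument in the paper to compare yours against. Your choice of data structure---a balanced BST on the current staircase with per-wedge records---and the amortization of deletions are exactly the right ideas and do yield the $O(n\log n)$ bound.

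One step needs correction, though. You assert as ``the crucial structural point'' that any wedge that can become the new wedge containing $p$ must be adjacent on the staircase to $p_L$ or $p_R$, so that only $O(1)$ wedge-records compete for the lexicographic minimum. This is false whenever the arriving point $p$ lies southwest of several old staircase points, i.e., whenever the block you delete is nonempty. Every maximal monochromatic wedge whose run of staircase points lies inside that block geometrically contains $p$, and giving $p$ that wedge's color produces a monochromatic wedge through $p$ of size one more than the old wedge, not size $1$. (For instance, if the old staircase consists of five points colored $1,2,1,2,1$ and $p$ arrives southwest of all of them, choosing color $1$ or $2$ yields size-$2$ wedges; the preference rule must see this and pick a fresh color instead.) So in general $\Theta(d)$ wedge-records compete, where $d$ is the number of deleted staircase points, and the preference rule must inspect all of them to be correct.

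This does not damage the final bound: those $\Theta(d)$ records are precisely the ones being destroyed at this step, so the scan is paid for by the same amortization you already invoke for the BST deletions (each staircase point, and hence each wedge, is removed at most once over the whole run). Once you fold the color-selection scan into that amortized charge, your argument goes through and the $O(n\log n)$ total stands.
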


The proof of this proposition is omitted as it follows easily from the analysis of the algorithms.

\subsection{Online coloring intervals}\label{online-intervals}

This section deals with the following {\em interval coloring problem}.
Given a finite family of intervals on the real line, we want to color them online with $c$ colors such that
throughout the process if a point is covered by at least $k$ intervals, then not all of these intervals have the same color.

\begin{proposition}
The (online, quasi-online, semi-online) interval coloring problem is equivalent to a restricted case of the problem of $k$-properly (online, quasi-online, semi-online) coloring points with respect to wedges, where we consider only wedges whose apex is on the diagonal line (defined by $y=-x$).
\end{proposition}

\begin{proof}
Associate to every point $p$ of the diagonal the wedge with apex $p$ and associate with every interval $I=((x_1,-x_1),(x_2,-x_2))$ of the diagonal line the point $(x_1,-x_2)$.
It is easy to see that $p\in I$ if and only if the point associated to $I$ is contained in the wedge associated to $p$.
\end{proof}

\begin{corollary}
Any upper bound on the number of colors necessary to (online, quasi-online, semi-online) color wedges in the plane is also an upper bound for the number of colors necessary to (online, quasi-online, semi-online) color intervals in $\mathbb{R}$.
\end{corollary}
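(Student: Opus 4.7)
The plan is to derive the corollary as an immediate consequence of the preceding Proposition. The first step is to observe that the bijection it exhibits between intervals on $\mathbb{R}$ and points in the plane is defined pointwise: an interval $I$ is mapped to its image point without any reference to the other intervals in the family. Consequently, if the intervals arrive online in the order $I_1, I_2, \ldots$, the corresponding points in the plane arrive in the same order, and an online algorithm on one side induces an online algorithm on the other.

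The second step is to note that the restricted wedge coloring problem identified in the Proposition --- where one only insists on non-monochromaticity for wedges with apex on the line $L\colon y=-x$ --- imposes a strict subset of the constraints of the unrestricted wedge coloring problem. Hence any online method that uses $c$ colors to guarantee that no wedge of size at least $k$ is monochromatic in the plane also guarantees this for the subfamily of wedges with apex on $L$. Combined with the equivalence provided by the Proposition, this shows that the same $c$ colors suffice to online color intervals on $\mathbb{R}$, which is exactly the content of the corollary.

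Since the argument amounts to unwinding the bijection and invoking the Proposition, there is no genuine obstacle; the only thing worth a brief check is that the bijection is compatible with the online arrival order, which is transparent from its pointwise definition. The same reasoning goes through verbatim in the quasi-online setting, since taking the prefix $\{I_1,\dots,I_i\}$ on the interval side corresponds to taking the prefix of the associated points on the wedge side.
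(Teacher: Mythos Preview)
Your argument is correct and is exactly the reasoning the paper intends: the corollary is stated without proof, as an immediate consequence of the preceding Proposition, and you have simply spelled out the two evident observations (the bijection respects the online order, and the restricted wedge problem relaxes the full one). There is nothing to add.
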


Also the lower bounds of Theorem \ref{thm:noonline} and of Theorem \ref{3color} follow for intervals easily by either repeating the proofs for intervals or by the following observation.

\begin{observation}\label{wedgeintervalobs}
The proofs of  Theorem \ref{thm:noonline} and of the first part of Theorem \ref{3color} can be easily modified such that all the relevant wedges have their apex on the diagonal line.
\end{observation}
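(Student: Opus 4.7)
The plan is to re-execute both inductive constructions strictly inside the half-plane $\{x+y<0\}$, organizing the adversary's points into nested, well-separated rectangular blocks so that every monochromatic wedge required by the arguments can be realized with apex on $y=-x$. Via the bijection of the previous proposition, this yields the analogous lower bounds for the interval problem.

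For Theorem \ref{wedgevector} I would induct on $n$, placing the two subproblems inside disjoint rectangular regions $R_A\subseteq[a_1,a_2]\times[a_3,a_4]$ and $R_B\subseteq[b_1,b_2]\times[b_3,b_4]$, both strictly below the anti-diagonal, with $R_B$ so far to the southeast that $b_1>-a_3$. Under this separation the two apex-parameter intervals $[a_1,-a_3]$ and $[b_1,-b_3]$ for anti-diagonal wedges hitting $R_A$ and $R_B$ respectively are disjoint, so no single anti-diagonal wedge can span both blocks. The inductive hypothesis applied inside each block produces monochromatic wedges of every color with apex on $y=-x$, and the case analysis of the original proof then carries over: if the two color-vectors differ, their pointwise maxima already sum to $\ge n+1$; otherwise, one inserts an extra point $q$ in a third region $R_C$ strictly southwest of $R_A$ (and still below the anti-diagonal), and observes that the anti-diagonal apex previously chosen for the first subproblem's color-$i$ wedge automatically captures $q$ as well (because $q$ lies SW of every first-subproblem point), producing the desired color-$i$ wedge of size one larger with the same apex on $y=-x$.

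The first part of Proposition \ref{3color} admits an analogous block-based modification: arrange the sequence of points on a NW-SE staircase strictly below $y=-x$, with spacings chosen so that each of the three maximal monochromatic wedges $W_1,W_2,W_3$ is captured by an anti-diagonal apex, and place each new point in a small region below the anti-diagonal that lies southwest of $W_2$'s points while remaining a potential member of $W_1$ and $W_3$. The combinatorial case analysis (whether $W_2$ is destroyed, or $W_1$ or $W_3$ grows) is unaffected by this geometric embedding.

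The main obstacle is the bookkeeping to ensure simultaneously that (i) every block lies strictly below $y=-x$, (ii) no anti-diagonal apex reaches across two sibling blocks, and (iii) the extension by the additional southwest point is captured by the same anti-diagonal apex as the original color-$i$ wedge. All three conditions are satisfied by choosing sufficiently separated block coordinates at each recursive level (e.g., a doubling scale), which is why the paper describes the modification as easy.
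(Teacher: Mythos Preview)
The paper gives no proof of this observation at all; it simply asserts that the modification is easy. Your sketch is a perfectly reasonable way to make the observation precise, and it is essentially correct. The cleanest way to see it is via the interval dictionary you invoke: the two recursive subproblems become two families of intervals supported on disjoint segments of the line, and the ``extra southwest point'' becomes a single long interval containing the whole first family but disjoint from the second. Your separation condition $b_1>-a_3$ is exactly the statement that the two families of intervals are disjoint.

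There is one point where your write-up is slightly imprecise. You place the extra point $q$ merely ``strictly southwest of $R_A$'' and then check condition~(iii), that the first block's colour-$i$ anti-diagonal wedge captures $q$. That is automatic, as you note. What is \emph{not} automatic is that the second block's witness wedges still avoid $q$: an anti-diagonal apex $(t,-t)$ with $t\in[b_1,-b_3]$ will contain $q$ whenever $q_y\le -b_1$, since $q_x<a_1<b_1\le t$ forces $q_x\le t$. So you must additionally require $q_y>-b_1$, i.e.\ choose $q_y\in(-b_1,a_3)$; this interval is nonempty precisely because of your separation hypothesis $b_1>-a_3$. In interval language this is just the obvious requirement that the long containing interval not reach into the second family. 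Your conditions (i)--(iii) as stated do not quite capture this (condition~(ii) applied between $R_C$ and $R_A$ would in fact be undesirable, since you want the $R_A$ wedges to see $q$), but once you add the constraint $q_y>-b_1$ everything goes through, and the recursive bounding boxes can be chosen consistently by scaling and translating along the anti-diagonal.
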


In particular, we have the following.

\begin{corollary}
There is an algorithm to color online $N$ intervals in $\mathbb{R}$ using $\Theta(\log N/k)$ colors such that for every point $x$, contained in at least $k$ intervals, there exist two intervals of different colors containing $x$.
\end{corollary}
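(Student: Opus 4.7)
The plan is to derive this corollary by combining the equivalence between the interval coloring problem and the restricted wedge coloring problem (established in the proposition just above) with the matching upper and lower bounds already developed for wedges.

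For the upper bound, I would invoke the corollary that immediately follows the proposition, which says that any upper bound on the number of colors needed to online-color wedges in the plane transfers to intervals on the line. Applying this to Corollary \ref{constkcor}, which gives an online method using $O(\log N / k)$ colors for wedges, we immediately obtain an online coloring of $N$ intervals with $O(\log N / k)$ colors such that no point contained in at least $k$ intervals is monochromatic. No modification of the wedge algorithm is required, since the bijection in the proposition takes the online arrival of intervals on the line to an online arrival of points in the plane (each new interval becomes a new point), and preserves the containment relation.

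For the matching lower bound, the idea is to push the wedge lower bound of Theorem \ref{wedgevector} (whose corollary gives $N = 2^{ck}-1$ points forcing a monochromatic wedge of size $k+1$) through the same bijection. Directly, the bijection would require that the point set constructed in the proof of Theorem \ref{wedgevector} uses only ``relevant'' wedges whose apex lies on the line $y=-x$, so that the preimage gives a family of intervals on the real line. This is precisely what Observation \ref{wedgeintervalobs} guarantees: the construction of Theorem \ref{wedgevector} can be carried out so that all the monochromatic wedges witnessing the lower bound have their apex on the diagonal. Thus, the same $2^{ck}-1$ bound applies to intervals, yielding $\Omega(\log N / k)$ colors.

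The main potential obstacle is checking that the bijection respects the online order — that is, that an online adversary against the wedge algorithm (placing points one at a time as in the proof of Theorem \ref{wedgevector}) corresponds to an online adversary against the interval algorithm (revealing intervals one at a time). But the bijection is a simple coordinate reparameterization that is defined pointwise, so placing a new point $(x_1, -x_2)$ corresponds to revealing the single interval with endpoints $x_1$ and $x_2$, and the order of revelation is preserved. Hence both directions of the $\Theta(\log N / k)$ bound transfer cleanly, and the corollary follows.
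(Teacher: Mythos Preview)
Your proposal is correct and follows essentially the same approach as the paper: the upper bound is transferred from Corollary~\ref{constkcor} via the bijection (the paper's corollary that upper bounds for wedges apply to intervals), and the lower bound is transferred from Theorem~\ref{wedgevector} via Observation~\ref{wedgeintervalobs}. The paper states this corollary as an immediate consequence of the preceding discussion without further argument, and your write-up simply makes those steps explicit.
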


As we have seen, positive results for intervals follow directly from the corresponding results for wedges.
Thus all the statements we proved hold for online coloring wedges, also hold for intervals, however, it seems unlikely that the exact bounds are the same.
Thus, we would be happy to see (small) examples where there is a distinction.
As the next section shows, there is a difference between the exact bounds for quasi-online coloring wedges and intervals. 

\section{Quasi-online coloring intervals}\label{quasi-online}

In this section we consider proper quasi-online coloring an ordered collection of intervals $\{I_t\}_{t=1}^{n}$, i.e., proper quasi-online coloring of the dual hypergraph defined by these intervals.

\begin{theorem}\label{2col} Any finite ordered family of intervals on the line can be quasi-online $3$-properly $2$-colored, i.e., colored quasi-online with red and blue such that at any time, for any point contained in at least $3$ intervals, at least one of these intervals is red and another one is blue.
\end{theorem}

We exploit an idea used in \cite{cd}; instead of coloring online the intervals, we online build a labelled acyclic graph (i.e., a forest) with the following properties. At any time $t$, each vertex of the current graph corresponds to an interval on the line, such that for every {\em original} interval (i.e., an interval in $\{I_i\}_{i=1}^{t}$) there is a corresponding vertex. There might be other vertices in the graph corresponding to {\em auxiliary} intervals.
In notation, we usually identify a vertex with the corresponding interval, without causing confusion. The final coloring of the intervals is then generated from this graph. In particular, to define a $2$-coloring, we assign each edge in the forest one of two labels, ``different'' or ``same.''  For an arbitrary coloring of exactly one vertex in each component (tree) of the graph, there is a unique extension to a coloring of the whole graph {\em compatible} with the labelling, i.e., such that each edge labelled ``same'' is incident to vertices of the same color and each edge labelled ``different'' is incident to vertices of different colors. We say that a property is {\em forced} by the labelling if every compatible coloring has this property. At the end we prove that the forest we built forces the original intervals to be $3$-properly $2$-colored at any time.
In \cite{cd} all the edges were labelled ``different,'' so it was actually a simpler variant of our current scheme. As we will see, this idea can also be generalized to more than two colors.

We denote the color of an interval $I$ by $\phi(I)$, the left endvertex of $I$ by $l(I)$ and the right endvertex of $I$ by $r(I)$. These vertices are real numbers, and so they can be compared. We can suppose that they are all different, as slightly perturbing them can only make the coloring problem more difficult.

\begin{proof}[Proof of Theorem \ref{2col}]
Let $\{I_t\}_{t=1}^{n}$ be the given ordered family which has to be quasi-online $3$-properly $2$-colored. We first build the forest and then show that any coloring compatible with this forest is a quasi-online $3$-proper $2$-coloring of the original intervals, as required. As we build the forest we also maintain a family of intervals (corresponding to a subset of the vertices of the forest), called the {\em active} intervals.
The family of active intervals will change during the process.
An interval corresponding to a vertex is not necessarily one of the original intervals, $\{I_t\}_{t=1}^{n}$, it might be an auxiliary interval created during the process.
At any time $t$ the vertices of the actual forest correspond to the first $t$ original intervals and the auxiliary intervals created up to time $t$.
After coloring the forest of the original and auxiliary intervals, we get the desired coloring. 
We maintain that the following properties hold any time, i.e., for any $t$ after adding interval $I_t$ and running step $t$ of the forest-building algorithm (defined later) the following properties hold.

\begin{enumerate}
\item Every point of the line is covered by at most two active intervals.
\item No active interval contains another active interval.
\item For any point on the line (at least) one of the following holds.
\begin{itemize}
\item[(a)] The point is contained in the same number of active intervals as original intervals, and additionally the labelling forces these original
intervals and these active intervals to have the same set of colors.
\item[(b)]
The labelling forces that the point is contained in original intervals of different colors.
\end{itemize}
\item The graph is a forest and each tree in the forest contains exactly one vertex that corresponds to an active
interval.
\end{enumerate}

During the forest-building process for an arbitrary point property 3(a) will hold until some moment and then property 3(b) will hold from that moment on.
Note that if for some point property 3(b) holds, then it will remain so later, as adding vertices and edges to the graph cannot ruin property 3(b).
By property 1, this will guarantee that points in at least $3$ original intervals are contained in both colors.
Property 4 ensures that a coloring of the active intervals determines a unique coloring of all the intervals, such that this coloring is compatible with the labelling of the forest. 
Property 2 is a technical condition.

Now we define the forest-building algorithm.
For the first step we simply make the first interval active; our forest will consist of a single vertex corresponding to this interval. In general, at the beginning of step $t$, we have a list of active intervals, ${\cal J}_{t-1}$. Now we add the $t^{\text{th}}$  interval, $I_t$, to the forest. See Figure \ref{figglue} for an example. If $I_t$ is covered by at least one active interval, then we choose one, $J\in {\cal J}_{t-1}$ and connect $I_t$ to $J$ with an edge labelled ``different.'' If there is no active interval containing $I_t$, we add $I_t$ to the family of active intervals. Now if there are active intervals contained in $I_t$, then we deactivate all of them (remove from the family of active intervals) and connect each of them to $I_t$ in the graph with an edge labelled ``different.'' This way properties 2 and 4 remain true. For a point $p$ if property 3(a) did hold before adding $I_t$ then if it is contained in a just deactivated interval, then at this moment of our algorithm property 3(b) will hold for $p$, otherwise property 3(a) remains true (as the possible change in the set of active intervals containing $p$ is the addition of $I_t$, which is an original interval as well).

\begin{figure}
\begin{center}
\includegraphics[width=14cm]{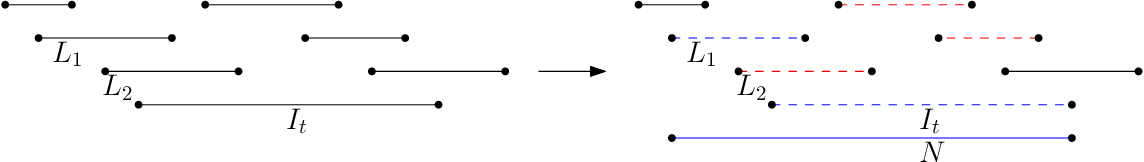}
\end{center}
\caption{A case in the proof of Theorem \ref{2col}. Active intervals are represented by solid lines, deactivated intervals by dashed lines.}
\label{figglue}
\end{figure}

The last thing we do in step $t$ of our forest-building algorithm is to ensure property 1, i.e., that no point is contained in three active intervals.
If there exist such points, they must be contained within $I_t$ (as before adding $I_t$ by the first property there were no such points). 
Using the first property, let $L_1$ and $L_2$ be the (at most) two active intervals covering $l(I_t)$ such that $l(L_1)<l(L_2)$ (if both of them exist).
Similarly, let $R_1$ and $R_2$ be the (at most) two active intervals covering $r(I_t)$ such that $l(R_1)<l(R_2)$ (if both of them exist).
No $L_i$ and $R_j$ can coincide, as such an interval would cover $I_t$. No other active interval can intersect $I_t$, as it would necessarily be completely contained in $I_t$, but all such intervals are already deactivated. Depending on how many of these four intervals exists, we proceed slightly differently.

First, suppose that there is no $L_2$ and $R_2$, only $L_1$ and $R_1$.
$L_1$ and $R_1$ have to intersect, otherwise there is no point covered $3$-fold.
This implies that $L_1\cup R_2$ cover $I_t$.
We deactivate all three of them and add a new interval $N=L_1\cup I_t\cup R_1$ to the graph and make $N$ active.
In the graph, connect $L_1$ and $R_1$ to $N$ with edges labelled ``same''  and $I_t$ to $N$ with edge labelled ``different.''

Next, suppose that all of $L_1$, $L_2$, $R_1$ and $R_2$ exist.
Deactivate $L_1$, $L_2, I_t, R_1$ and $R_2$, and activate a new interval $N=L_1\cup I_t\cup R_2$.
In the graph, connect $L_1, I_t$ and $R_2$ to $N$ with edges labelled ``same.''  Connect $L_2$ and $R_1$ to $N$ with edges labelled ``different.''

Otherwise, without loss of generality, suppose that $L_1$, $L_2$ exists and $R_2$ does not exist ($R_1$ may or may not exist).
Deactivate $L_1$, $L_2$ and $I_t$, and connect them to the new active interval $N=L_1\cup I_t$ again with the edges of $L_1$ and $I_t$ labelled ``same'' and the edge of $L_2$ labelled ``different.''
(Notice that in fact this last case is the same as the first, with $L_2$ playing the role of $I_t$.)

This way we ensured that property 1 holds and it is easy to check that properties 2 and 4 remain true. Similarly as above, it is easy to check that if for a point property 3(a) did hold then now either property 3(a) or 3(b) holds. Recalling that property 3(b) cannot be ruined we get that property 3 remains true for every point.

By property 4, at the end of the process any coloring of the final family of active intervals extends to a coloring of all the intervals (compatible with the labelling of the graph). We have to prove that for this coloring at any time $t$ any point $p$ contained by at least $3$ of the original intervals is non-monochromatic. At time $t$, by property 1 we have that for such a $p$ property 3(a) cannot hold (as we cannot have $3$ active intervals covering $p$). Thus property 3(b) must hold, which is exactly that $p$ is contained in original intervals of both colors.
\end{proof}

Before proceeding with the next proof we define explicitly the structure that the active intervals have during our algorithms (in the previous and in the next proof as well).
We call an ordered family of intervals $J_1,\dots J_l$ a {\em chain} if $l(J_i)<l(J_j)$ and $r(J_i)<r(J_j)$ for every $i<j$ and $J_i\cap J_{i+1}\ne \emptyset$ for every $i<l$ and no point of the real line is contained in three of the intervals. Observe that in a chain any interval $J_i$ intersects exactly $J_{i-1}$ and $J_{i+1}$ (if they exist). Two chains are {\em disjoint} if the intervals in the first chain is disjoint from the union of the intervals of the second chain.

\begin{theorem}\label{3col} Any finite ordered family of intervals on the line can be quasi-online $2$-properly $3$-colored, i.e., colored quasi-online with $3$ colors such that at any time for any point $p$ contained by at least $2$ of the intervals, the intervals covering $p$ are not all of the same color.
\end{theorem}

\begin{proof}
We again build an edge-labelled graph $G$, in which vertices correspond to (original and auxiliary) intervals and the label of a directed edge is again one of two labels, ``different'' or ``same.'' 
Again some of the intervals are active.
An order on the non-active intervals is {\em appropriate}, if putting all active intervals arbitrarily ordered at the end of this order we get an order for which every non-active interval is {\em appropriate}: either has at most two forward edges labelled different or at most one forward edge labelled same. 
Suppose that we could color the active intervals compatibly with $G$, then given an appropriate ordering of the non-active intervals, it is easy to $3$-color these intervals in backwards order to get a $3$-coloring compatible with $G$.
During the graph-building algorithm we will maintain such an order of the non-active intervals and also that $G$ induces a union of paths on the active intervals, thus a compatible coloring of $G$ will exist. At the end we prove that such a compatible $3$-coloring of the final graph is necessarily a $2$-proper $3$-coloring at any time.

In the proof we say that we are coloring an interval $I$ differently from (resp.\ same as) another interval $J$ when we add an edge $IJ$ to $G$ with label different (resp.\ same).
Now we state the required properties.

\begin{enumerate}
\item Every point of the line is covered by at most two active intervals.
\item No active interval contains another active interval.
\item For any point on the line (at least) one of the following holds.
\begin{itemize}
\item[(a)] The point is contained in the same number of active intervals as original intervals, and additionally the labelling forces these original
intervals and these active intervals to have the same set of colors.
\item[(b)]
The labelling forces that the point is contained in original intervals of different colors.\end{itemize}
\item The order $<_a$ of the non-active intervals is appropriate. Also, the family of active intervals induces a union of paths in $G$, more precisely, two active intervals are connected and the connecting edge has label different if and only if they are consecutive in a chain.
\end{enumerate}

Note that the first two properties ensure that, just like in the proof of Theorem \ref{2col}, at any step the active intervals have a unique partition into disjoint (maximal) chains, thus property 4 is well-defined. 

Unlike in the previous proof, now points covered by only two active intervals are also important to us, so property 4 ensures that any two intersecting active intervals receive a different color.
In particular, property 3(b) must hold for every point contained in at least $2$ original intervals.
Apart from this, the arguments are similar to the ones in the previous proof.



Now we define the graph-building algorithm. During the process, when an interval is deactivated, in $<_a$ it is placed after all previously deactivated intervals, that is, at the top of the order $<_a$. Also, during the process we never add nor delete an edge incident to any interval that was deactivated earlier, this way all previously deactivated intervals will necessarily remain appropriate. Note that we can and will indeed delete edges sometimes, in which case at the same time we add some edges that force that a compatible coloring with the new graph is necessarily also compatible with the deleted edge, thus the deleted edges are redundant. We do this solely to simplify our presentation.

In the first step we add $I_1$ to the (previously empty) graph and activate it.
In the inductive step we add $I_t$ to the graph and also to the family of active intervals.
If $I_t$ is covered by an active interval or by the union of two (consecutive) intervals of a chain, then
we deactivate $I_t$ and color it differently from the interval(s) covering it, i.e., we add at most two edges from $I_t$ to active intervals, labelled different. This way $I_t$ will be appropriate.

Properties 2 and 4 remain true. For a point $p$ if property 3(a) did hold before adding $I_t$, then if it is contained in a just deactivated interval, then at this moment of our algorithm property 3(b) will hold for $p$, otherwise property 3(a) remains true (as the possible change in the set of active intervals containing $p$ is the addition of $I_t$, which is an original interval as well).

\begin{figure}
\begin{center}
\includegraphics[width=14cm]{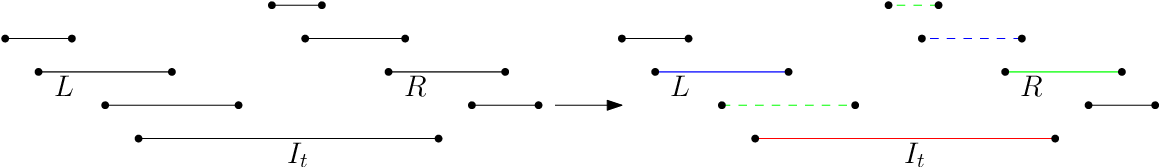}
\end{center}
\caption{Case i) of Theorem \ref{3col}}
\end{figure}

Again, the last thing we do in step $t$ of our graph-building algorithm is to ensure property 1, i.e., that no point is contained in three active intervals.
If there is no such point, we are done. Otherwise, such points must be in $I_t$. Denote (if exists) by $L$ the active interval with $l(L)<l(I_t)$ with the leftmost left end, and by $R$ the active interval with $r(I_t)<r(R)$ with the rightmost right end that covers a triple covered point. 
They necessarily intersect $I_t$.
We distinguish two cases.\\

Case i) If $I_t$ is not covered by the union of the intervals in one chain, then either $L$ or $R$ does not exist,
or $L$ and $R$ are not in the same chain. In either case, $I_t$ is not covered by $L\cup R$.
We deactivate all active intervals covered by $L\cup I_t\cup R$, except for $L$, $I_t$ and $R$. The rule to color the now deactivated intervals is that they get a color different from $I_t$, i.e., for each deactivated interval $J$ we add an edge to $I_t$ labelled different. The order in which we deactivate these intervals (and add them to the top of $<_a$) is that one-by-one for each chain involved (the chain of $L$, $R$ and the chains in-between) we add first a left-most or right-most interval (in the chain containing $L$ (resp.\ $R$) it must be the rightmost (resp.\ leftmost)) and then one-by-one its neighbors. Adding the deactivated intervals in this order to $<_a$ ensures that by property 4 such a deactivated interval has at most two forward going edges (one to $I_t$ and at most one to one of its neighbors in the path corresponding to this chain).



In the remaining cases $L$ and $R$ exist and are in one chain and $I_t$ is covered by the union of the intervals in this chain. Denote by $L_-$ and $L_+$ (resp.\ $R_-$ and $R_+$) the intervals preceding and succeeding $L$ (resp.\ $R$) in the chain (if they exist). 

Case ii)
First assume that there are even many intervals in the chain between $L$ and $R$. 
We insert a new active interval $N$ that we get by taking the union of $L$ and these intervals. We connect $N$ to $L_-$ (if it exists) and to $R$ with edges labelled different. Now we deactivate $L$ and color it the same as we color $N$ (we add the edge $NL$ to the graph labelled same and delete the other at most two edges from $L$ to $L_-$ and $L_+$).
We deactivate the intervals between $L$ and $R$ in the chain and color them differently from $I_t$. We deactivate them in the left-to-right order, thus they will be appropriate in $<_a$ (they have at most two forward edges, one to $I_t$ and one to their right neighbor in the chain). We deactivate $I_t$ and color differently from the color of $N$ and $R$ (we add the edges $I_tN$ and $I_tR$ to the graph). We need to check that the deleted edges became redundant. In a compatible coloring $L$ has the same color as $N$, different from $L_-$, as required, and also different from $R$, which must get the same color as $L_+$ (this is forced by a chain (a path in $G$) of active intervals all colored differently from $I_t$ and thus alternating), as required.

Case iii)
Next assume that there are odd many intervals between $L$ and $R$.
We insert the new active interval $N=L\cup I_t\cup R$ and connect it to $L_-$ and $R_+$ (if they exist) with edges labelled different. Now we again deactivate $L$ and color it the same as we color $N$ (we add the edge $NL$ to the graph labelled same and delete the other at most two edges from $L$ to $L_-$ and $L_+$). We also deactivate $R$ and color it the same as we color $N$ (we add the edge $NR$ to the graph labelled same and delete the edge from $R$ to $R_+$; note that we do not delete the edge $RR_-$).
We deactivate the intervals in the chain between $L$ and $R$ and color them differently from $I_t$. We deactivate them in the left-to-right order, thus they will be appropriate in $<_a$ (they have at most two forward edges, one to $I_t$ and one to their right neighbor in the chain). We deactivate $I_t$ and color differently from the color of $N$ (we add the edge $I_tN$ to the graph). We need to check that the deleted edges became redundant. In a compatible coloring $L$ has the same color as $N$, different from $L_-$, as required; $R$ has the same color as $N$, different from $R_+$, as required; 
the deactivated intervals from $L+$ to $R$ get the two colors different from the
color of $I_t$ alternatingly (forced by a chain which is a path in the graph), which ensures that $L$ and $L_+$ have different colors, as required.

\begin{figure}
\begin{center}
\includegraphics[width=14cm]{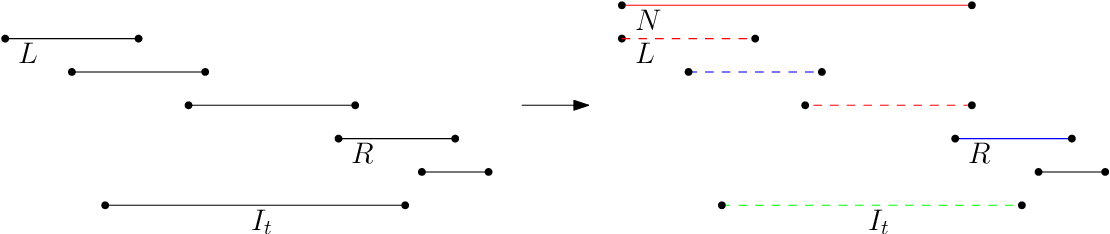}
\end{center}
\caption{Case ii) of Theorem \ref{3col}}
\end{figure}

\begin{figure}
\begin{center}
\includegraphics[width=14cm]{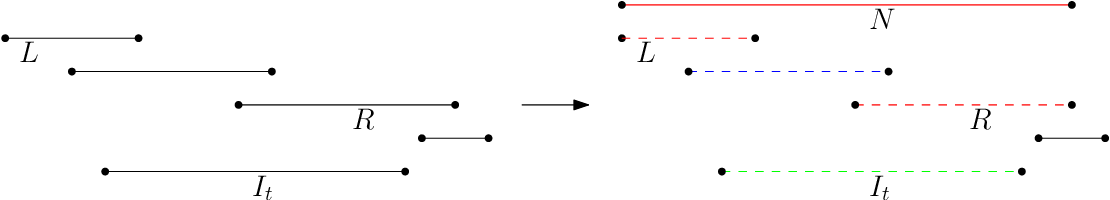}
\end{center}
\caption{Case iii) of Theorem \ref{3col}}
\label{fig:3coleven}
\end{figure}

This way we made sure that property 1 holds and it is easy to check that properties 2 and 4 remain true.
It is again easy to check that property 3 also remains true for every point.

By property 4 at the end the active intervals induce a family of paths, which can be easily colored properly (even with $2$ colors). As the ordering $<_a$ is appropriate, coloring the non-active intervals in backwards order the coloring extends to a coloring of all the intervals compatible with the labelling of the graph. We have to prove that for this coloring at any time $t$ any point $p$ contained by at least $2$ of the original intervals is non-monochromatic. During the process we just added edges and deleted redundant edges, thus a coloring compatible with the final graph is also compatible with the graph at time $t$. Thus at time $t$, by property 1 we have that such a $p$ is contained in at most two active intervals. If it is contained in exactly two active intervals then by property 4 it is contained in intervals of both colors. Otherwise $p$ is contained in at most one active interval but at least two original intervals, thus property 3(a) cannot hold. Thus property 3(b) must hold, which is exactly that $p$ is contained in original intervals of both colors.
\end{proof}

\begin{theorem}\label{time} Colorings guaranteed by Theorem \ref{2col} and
Theorem \ref{3col} can be found in $O(n\log n)$ time.
\end{theorem}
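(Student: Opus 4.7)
The plan is to implement both algorithms with a balanced binary search tree (for concreteness, a red--black tree) whose nodes are the currently active intervals, keyed by left endpoint. The invariant that no active interval contains another means that the BST order agrees with the order by right endpoint, and that the (at most two) active intervals covering any fixed point are consecutive in the tree. Consequently, each of the following operations will take $O(\log n)$ time: locating the active intervals (up to two) that cover a given query point, and initiating a range enumeration of all active intervals whose left endpoint lies in $[l(I_t), r(I_t)]$.

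When the $t^{\text{th}}$ interval $I_t$ arrives, I will first perform $O(1)$ BST lookups to identify the candidates $L_1,L_2,R_1,R_2$ covering the endpoints of $I_t$, then enumerate in time $O(\log n + d_t)$ the $d_t$ active intervals strictly contained in $I_t$. In that enumeration I can, with $O(d_t)$ extra work, check overlaps between successive entries and so decide which case of Theorem \ref{2col} (or of Theorem \ref{3col}) applies, and in the three-colouring algorithm determine whether $I_t$ is covered by a single chain together with its $L$ and $R$. I then perform the deactivations in the BST, insert at most one freshly created active interval ($N$ in Theorem \ref{2col}; $L'$ or $N$ in Theorem \ref{3col}), and record the local labels or rules (edges of the forest, respectively arrows of the dependency DAG) in an auxiliary adjacency list. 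Since each step creates at most one new active interval, the BST has size $O(n)$ throughout, the total number of deactivations is $O(n)$, and the per-step cost $O(\log n + d_t)$ sums to $O(n\log n)$.

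After all $n$ intervals have been processed, I will produce the final colouring in a single linear-time pass. For Theorem \ref{2col}, a depth-first traversal of each labelled tree, starting from an arbitrarily coloured active root, extends a colouring respecting all ``same''/``different'' labels. For Theorem \ref{3col}, I first $2$-colour the independent (i.e.\ active) intervals---they form a disjoint union of paths under the overlap relation because no three active intervals share a point---and then traverse the dependency DAG in reverse order of creation, assigning each dependent interval a colour outside its $\le 2$ forbidden ones. The main obstacle I anticipate is verifying that the chain structure in Theorem \ref{3col} can be maintained locally under insertions and deletions within the $O(\log n)$ budget per step; since chain neighbours are simply BST-neighbours whose intervals overlap, this should follow from recomputing overlaps between the $O(1)$ BST-neighbours that change at each update.
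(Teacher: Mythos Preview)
Your proposal is correct and follows essentially the same approach as the paper's own proof, which is also only a sketch: maintain the active intervals in sorted order so that each update costs $O(\log n)$ plus the number of deactivations, observe that the total number of (original plus auxiliary) intervals and hence of deactivations is $O(n)$, and finish with a linear-time pass over the dependency forest/DAG. You simply make the data structure explicit (a balanced BST) and spell out the amortization $\sum_t d_t = O(n)$, whereas the paper leaves these as ``easy details.''
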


\begin{proof}
Instead of a rigorous proof we provide only a sketch, the easy details are left
to the reader. In both algorithms we have $n$ intervals, thus $n$ steps. In each
step we define a bounded number of new active intervals, thus altogether we have
$cn$ original and auxiliary intervals. We always maintain the (well-defined)
left-to-right order of the active intervals. We also maintain an order of the
(original and auxiliary) intervals such that an interval's color depends only on the
color of one or two intervals' that are later in this order. This order can be
easily maintained as in each step the new interval and the new active intervals
come at the end of the order. We also save for each interval the one or two
intervals which it depends on. This can be imagined as the intervals represented by vertices on the
horizontal line arranged according to this order and an acyclic directed graph on them representing the dependency relations, thus each edge goes
backwards and each vertex has in-degree at most two (at most one in the first
algorithm, i.e., the graph is a directed forest in that case). In each step we
have to update the order of active intervals and the acyclic graph of all the
intervals, this can be done in $c \log n$ time plus the time needed for the
deletion of intervals from the order. Although the latter can be linear in one
step, yet altogether during the whole process it remains $cn$.
At the end we just color the vertices one by one from right to left
following the rules, which again takes only $cn$ time. Altogether this is
$cn\log n$ time.
\end{proof}

As we noted earlier, these problems are equivalent to (offline) colorings of bottomless rectangles in
the plane. Using this phrasing, Theorem \ref{3col} and Theorem \ref{2col} were
proved already in \cite{wcf,keszegh}, yet those proofs are quite
involved and they only give quadratic time algorithms, so these results are
improvements regarding simplicity of proofs and efficiency of the algorithms.
The algorithms in \cite{wcf,keszegh} proceed with the
intervals in backwards order and the intervals are colored immediately, yet in each
step many intervals have to be recolored, this might be a reason why a lot of
re-colorings are needed there (which we do not need in the above proofs), adding
up to quadratic time algorithms (contrasting the near-linear time algorithms
above).

\section*{Acknowledgments}
This paper is an extended version of the conference publication \cite{onlinepaper}.  Part of this work was funded by the Department of Energy at Los Alamos National Laboratory under contract DE-AC52-06NA25396, and the DOE Office of Science Advanced Computing Research (ASCR) program in Applied Mathematics. We are thankful to an unanonymous reviewer for his many helpful comments.


\bibliographystyle{abbrv}

\end{document}